\documentclass[11pt]{article}

\usepackage[a4paper,margin=30mm]{geometry}
\usepackage{amsmath,amssymb,amsthm,mathtools}
\usepackage{mathrsfs}
\usepackage{bm}
\usepackage{enumitem}
\usepackage{microtype}
\usepackage[hidelinks]{hyperref}

\newtheorem{theorem}{Theorem}[section]
\newtheorem{proposition}[theorem]{Proposition}
\newtheorem{corollary}[theorem]{Corollary}
\newtheorem{lemma}[theorem]{Lemma}

\theoremstyle{definition}
\newtheorem{definition}[theorem]{Definition}

\theoremstyle{remark}
\newtheorem{remark}[theorem]{Remark}

\newcommand{\PP}{\mathbb P}
\newcommand{\CC}{\mathbb C}
\newcommand{\cZ}{\mathcal Z}
\newcommand{\cH}{\mathcal H}

\newcommand{\Div}{\operatorname{Div}}
\newcommand{\ord}{\operatorname{ord}}
\newcommand{\Hom}{\operatorname{Hom}}

\title{\bfseries
The Relative Differential Morphism and the Logarithmic Connection\\
of Dual Conformal Nets
}

\author{A.\,G. Nuramatov}

\date{}

\begin{document}

\maketitle

\begin{abstract}
Let \(X\) be a compact connected Riemann surface equipped with two
nonconstant meromorphic projections
\[
q,z:X\longrightarrow \PP^1.
\]
Their differentials determine a canonical meromorphic relative
differential morphism
\[
\cZ:T_q:=q^*T\PP^1\longrightarrow T_z:=z^*T\PP^1,
\qquad
\cZ=Tz\circ(Tq)^{-1},
\]
whose local coefficient in coordinate frames is
\(z_q=dz/dq\).  Its divisor is \(R_z-R_q\), and its local
logarithmic derivatives define the standard rank-one logarithmic
connection on
\(\cH=\Hom(T_q,T_z)\).  The general passage from a meromorphic
section to its logarithmic derivative is classical.  The main
point of the present paper is that, in the dual conformal-net
setting, the Levi--Civita curvature data of the two nets realize
this logarithmic connection by the symmetric formula
\[
\frac1H\widetilde\omega\,dz-H\omega\,dq
=
2i\,d\log z_q
=
2i\,\cZ^{-1}d\cZ,
\]
where the last expression is understood through local
trivializations of \(\cH\).  Thus the two dual conformal
connection differentials occur with opposite signs and their
difference is the logarithmic Maurer--Cartan form of the relative
differential morphism.  Its residues recover the relative
ramification multiplicities,
\[
\operatorname{Res}_p\Theta=2i\bigl(r_z(p)-r_q(p)\bigr),
\]
and globally
\[
\sum_{p\in X}\operatorname{Res}_p\Theta
=4i\bigl(\deg z-\deg q\bigr).
\]
Thus local conformal connection data determine both relative
ramification and the difference of the degrees of the two
projections.  We also discuss the dual morphism, square-root
issue, and a hyperelliptic example.
\end{abstract}

\medskip

\noindent
\textbf{Keywords.}
Riemann surface; meromorphic projection; pullback tangent bundle;
solder form; logarithmic connection; Maurer--Cartan form;
ramification divisor.

\medskip

\noindent
\textbf{MSC 2020.}
53C05, 30F10, 14H55.

\section{Introduction}

The Maurer--Cartan form is one of the basic constructions of
differential geometry.  If \(g\) is a map into a Lie group \(G\),
then
\[
g^{-1}dg
\]
measures the infinitesimal variation of \(g\) after translation
back to the identity.  The construction is logarithmic in nature:
in the one-dimensional multiplicative group \(\CC^*\) it reduces
simply to
\[
g^{-1}dg=d\log g.
\]

The purpose of the present paper is to identify an analogous
construction associated with two meromorphic projections on a
Riemann surface.

The general theory of regular singular and logarithmic
connections is classical and goes back to Deligne
\cite{Deligne}; for rank-one logarithmic connections on compact
Riemann surfaces, see also \cite{Singh}.  In particular, when a
nonzero meromorphic section \(s\) of a line bundle is written
locally as \(s=s_\alpha E_\alpha\), its logarithmic derivatives
\(d\log s_\alpha\) give local meromorphic connection forms (up to
the sign convention for the covariant derivative).  Related uses
of logarithmic differentials of meromorphic sections appear, for
example, in \cite{BenZviBiswas}.  Thus the general operation
\(s\mapsto d\log s\) is standard; the point of the present paper
is its application to the relative morphism of two projections
and its identification with the differential-geometric form
arising from dual conformal coordinate nets.

Let \(X\) be a compact connected Riemann surface and let
\[
q,z:X\longrightarrow\PP^1
\]
be nonconstant meromorphic maps.  Each projection determines a
pullback tangent line bundle,
\[
L_q:=q^*T\PP^1,
\qquad
L_z:=z^*T\PP^1.
\]
Their differentials provide two ways of identifying the tangent
line of \(X\) with a tangent line of the target sphere, at least
away from ramification.  Comparing these two identifications
produces a canonical meromorphic morphism
\[
\cZ:L_q\longrightarrow L_z.
\]

In local coordinates this morphism takes the familiar form
\[
\cZ
=
z_q\,\frac{\partial}{\partial z}\otimes dq,
\qquad
z_q=\frac{dz}{dq}.
\]
The coefficient \(z_q\) alone is coordinate dependent.  The
tensorial expression above, however, is intrinsic.  It is the
relative differential transformation between the two pulled-back
tangent bundles.

The logarithmic derivative of \(\cZ\) is therefore not, by
itself, a new construction: locally, if
\[
\cZ=g_\alpha E_\alpha,
\]
then the usual rank-one logarithmic connection is represented by
\[
\alpha_\alpha=g_\alpha^{-1}dg_\alpha.
\]
We use the normalization
\[
\Theta_\alpha=2i\,\alpha_\alpha,
\]
and write symbolically
\[
\Theta=2i\,\cZ^{-1}d\cZ.
\]
This notation always refers to the corresponding collection of
local logarithmic connection forms, not to the ordinary exterior
derivative of a globally defined scalar function.

The new geometric point is obtained when the pair \((q,z)\)
comes from dual conformal coordinate nets.  With the curvature
coefficients \(\omega\) and \(\widetilde\omega\) and conformal
factor \(H\) used in \cite{NuramatovAC}, we prove
\[
\frac1H\widetilde\omega\,dz-H\omega\,dq
=
2i\,d\log z_q
=
2i\,\cZ^{-1}d\cZ.
\]
Thus the logarithmic connection of the intrinsic relative
differential morphism is exactly the difference of the two dual
Levi--Civita connection differentials.  This is the main theorem
of the paper.

The paper is organized as follows.  Sections~2--4 construct the
relative differential morphism and describe its divisor in terms
of ramification.  Sections~5--6 recall the associated rank-one
logarithmic connection and its local Maurer--Cartan description.
Sections~7--9 discuss flatness, residues, the dual morphism,
square roots, and a hyperelliptic example.  Section~10 proves the
main conformal-net identification, and the final sections collect
its geometric consequences.

\section{Two meromorphic projections and their pullback bundles}

Let \(X\) be a compact connected Riemann surface and
\[
q,z:X\longrightarrow\PP^1
\]
two nonconstant meromorphic maps.

Associated with them are the holomorphic line bundles
\[
L_q=q^*T\PP^1,
\qquad
L_z=z^*T\PP^1.
\]
For \(p\in X\), their fibers are
\[
(L_q)_p=T_{q(p)}\PP^1,
\qquad
(L_z)_p=T_{z(p)}\PP^1.
\]

The differentials of the two maps are bundle morphisms
\[
Tq:TX\longrightarrow L_q,
\qquad
Tz:TX\longrightarrow L_z.
\]
At a point where \(dq\neq0\), the first morphism is an isomorphism
of complex lines; similarly \(Tz\) is an isomorphism wherever
\(dz\neq0\).

Let
\[
R_q=\sum_{p\in X}r_q(p)\,p,
\qquad
R_z=\sum_{p\in X}r_z(p)\,p
\]
denote the ramification divisors of \(q\) and \(z\),
respectively.  Thus \(r_q(p)\) and \(r_z(p)\) are the vanishing
orders of \(Tq\) and \(Tz\).

Set
\[
X^\circ
=
X\setminus
\bigl(\operatorname{supp}R_q\cup\operatorname{supp}R_z\bigr).
\]
On \(X^\circ\), both \(Tq\) and \(Tz\) are bundle isomorphisms.

The basic comparison map is therefore already forced upon us:
\[
Tz\circ(Tq)^{-1}:L_q|_{X^\circ}\longrightarrow L_z|_{X^\circ}.
\]

\begin{definition}
On \(X^\circ\), define the canonical relative differential
morphism by
\[
\cZ:=Tz\circ(Tq)^{-1}.
\]
\end{definition}

The morphism extends meromorphically across the omitted points,
as will be made explicit below.

\section{Solder forms and the relative morphism}

The identity endomorphism of \(T\PP^1\) has, in a local coordinate
\(w\), the familiar tensorial representation
\[
dw\otimes\frac{\partial}{\partial w}.
\]
Indeed,
\[
\left(
dw\otimes\frac{\partial}{\partial w}
\right)
\left(
a\frac{\partial}{\partial w}
\right)
=
a\frac{\partial}{\partial w}.
\]
This expression is independent of the choice of the coordinate
\(w\).  It is the canonical solder tensor of the tangent line
bundle.

Pulling it back by \(q\) and \(z\) gives the two local solder
expressions
\[
dq\otimes\frac{\partial}{\partial q},
\qquad
dz\otimes\frac{\partial}{\partial z}.
\]

We now compare them.

Let \(U\subset X^\circ\) be a coordinate neighborhood.  Since
\(dq\neq0\) on \(U\), \(q\) itself may be used as a local
coordinate, and \(z\) is locally a holomorphic function of \(q\).
Write
\[
z_q=\frac{dz}{dq}.
\]
Then
\[
dz=z_q\,dq.
\]

\begin{proposition}
On \(U\subset X^\circ\), the relative morphism \(\cZ\) is
represented by
\[
\cZ
=
z_q\,
\frac{\partial}{\partial z}\otimes dq.
\]
\end{proposition}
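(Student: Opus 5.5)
The plan is to verify the formula pointwise by evaluating both sides on local frames. Since both sides are bundle morphisms \(L_q\to L_z\) over \(U\), and \(L_q|_U\) is a line bundle, it suffices to check that they agree on a single nonvanishing local section of \(L_q\). The natural choice is the pulled-back frame \(\partial/\partial q\), meaning \(q^*(\partial/\partial w)\) for the coordinate \(w\) on the target of \(q\). Similarly, \(\partial/\partial z\) denotes the pulled-back frame of \(L_z\). Here we use that \(q(U)\) and \(z(U)\) may be taken inside affine charts of \(\PP^1\), shrinking \(U\) if necessary.

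First I would compute \((Tq)^{-1}\). Because \(dq\neq0\) on \(U\), \(q\) serves as a coordinate on \(U\). Let \(v\) be the tangent vector field on \(U\) dual to \(dq\), so that \(dq(v)=1\). By the definition of the differential,
\[
Tq(v)=dq(v)\,\frac{\partial}{\partial q}=\frac{\partial}{\partial q},
\qquad\text{hence}\qquad
(Tq)^{-1}\Bigl(\frac{\partial}{\partial q}\Bigr)=v.
\]
Next I would apply \(Tz\). In the same way, \(Tz(v)=dz(v)\,\partial/\partial z\). Using \(dz=z_q\,dq\), this gives \(dz(v)=z_q\). Therefore
\[
\cZ\Bigl(\frac{\partial}{\partial q}\Bigr)=z_q\,\frac{\partial}{\partial z}.
\]
Finally, I would evaluate the right-hand side \(z_q\,\frac{\partial}{\partial z}\otimes dq\) on \(\partial/\partial q\). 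Here \(dq\) is read as the coframe of \(L_q\) dual to \(\partial/\partial q\), which is the solder-form identification from this section. The evaluation yields \(z_q\,\partial/\partial z\), so the two sides agree on the frame. By linearity they then agree on all of \(L_q|_U\).

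The only real subtlety is making sure the notation \(dq\) is interpreted consistently. On \(X\), \(dq\) is a \(1\)-form, whereas in the tensor \(\frac{\partial}{\partial z}\otimes dq\) it must act on \(L_q\). I would resolve this by composing with the isomorphism \(Tq\): the pairing \(dq(Tq\,\xi)\) equals \(dq(\xi)\), and these are compatible precisely because of the solder tensor \(dw\otimes\partial/\partial w\) pulled back by \(q\).

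It also remains to check that the expression is independent of the choice of affine coordinates on \(\PP^1\). Changing \(w\mapsto \tilde w\) rescales \(\partial/\partial z\) by \(\partial w/\partial\tilde w\) and rescales \(z_q\) by the inverse factor; likewise on the \(q\)-side \(dq\) and \(z_q\) compensate. So the tensor is unchanged, consistent with \(\cZ\) being intrinsically defined as \(Tz\circ(Tq)^{-1}\).
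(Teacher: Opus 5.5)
Your proof is correct and takes essentially the same route as the paper: use \(q\) as a coordinate on \(U\) to invert \(Tq\) on the pulled-back frame \(\partial/\partial q\), apply \(Tz\) via \(dz=z_q\,dq\), and extend by linearity, just as the paper does with a general vector \(a\,\partial/\partial q\). Your closing check of coordinate independence is what the paper establishes separately in the proposition immediately following this one.
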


\begin{proof}
A vector in \(L_q\) has the local form
\[
v=a\,\frac{\partial}{\partial q}.
\]
Since
\[
(Tq)^{-1}
\left(
a\,\frac{\partial}{\partial q}
\right)
=
a\,\frac{\partial}{\partial q}
\]
in the \(q\)-coordinate on \(X\), applying \(Tz\) gives
\[
Tz
\left(
a\,\frac{\partial}{\partial q}
\right)
=
a\,z_q\,\frac{\partial}{\partial z}.
\]
Thus
\[
\cZ(v)
=
z_q\,dq(v)\frac{\partial}{\partial z},
\]
which is exactly the asserted tensorial expression.
\end{proof}

The tensorial formulation is essential.  The scalar \(z_q\) is
not itself invariant under independent changes of coordinates on
the two target spheres, whereas the bundle morphism \(\cZ\) is.

Suppose
\[
Q=Q(q),
\qquad
Z=Z(z)
\]
are new local coordinates.  Then
\[
\frac{dZ}{dQ}
=
\frac{Z_z}{Q_q}\,z_q,
\]
while
\[
\frac{\partial}{\partial Z}
=
\frac1{Z_z}\frac{\partial}{\partial z},
\qquad
dQ=Q_q\,dq.
\]
Hence
\[
\frac{dZ}{dQ}
\frac{\partial}{\partial Z}\otimes dQ
=
z_q
\frac{\partial}{\partial z}\otimes dq.
\]

We have therefore proved the following.

\begin{proposition}
The expression
\[
z_q\,
\frac{\partial}{\partial z}\otimes dq
\]
defines an intrinsic meromorphic section of
\[
\Hom(L_q,L_z)
\simeq
L_z\otimes L_q^{-1}.
\]
\end{proposition}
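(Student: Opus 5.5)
The plan is to establish two things: the local expression is independent of the chosen coordinates, and it extends meromorphically over all of \(X\). On \(X^\circ\) the first point is essentially the computation just carried out. I will make it precise as follows. Cover \(X^\circ\) by open sets \(U\) on which \(q\) and \(z\) take values in affine charts of \(\PP^1\). Since \(\partial/\partial z\) is a local frame of \(L_z\) and \(dq\) a local frame of \(L_q^{-1}\), the tensor \(z_q\,\partial/\partial z\otimes dq\) is a local holomorphic section of \(L_z\otimes L_q^{-1}\simeq\Hom(L_q,L_z)\).

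On overlaps the target coordinates change by \(Q=Q(q)\), \(Z=Z(z)\), for instance \(Q=1/q\) near \(q=\infty\). The transformation law displayed above shows that the two local expressions coincide, so they glue to a holomorphic section on \(X^\circ\). In fact this glued section is just \(\cZ\) of the earlier proposition, which is defined intrinsically as \(Tz\circ(Tq)^{-1}\). Either description therefore gives coordinate independence.

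The real content is the meromorphic extension across the finite set \(\operatorname{supp}R_q\cup\operatorname{supp}R_z\). Near such a point \(p\), I will work with a local coordinate \(t\) on \(X\) centred at \(p\) and affine coordinates \(q,z\) at the targets \(q(p)\), \(z(p)\), using \(1/q\) or \(1/z\) when the value is \(\infty\). In these coordinates \(q\) and \(z\) are holomorphic functions of \(t\), \(Tq\) is given by \(q'(t)\), and \(Tz\) by \(z'(t)\). Hence \(\cZ=(z'(t)/q'(t))\,\partial/\partial z\otimes dq\). Both \(q'\) and \(z'\) are holomorphic and not identically zero, because \(q\) and \(z\) are nonconstant. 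Their quotient is therefore meromorphic at \(t=0\), and it agrees with \(z_q=dz/dq\) on the punctured neighbourhood. Consequently \(\cZ\) extends meromorphically to all of \(X\), with local order \(r_z(p)-r_q(p)\) at \(p\), because \(\ord_p q'=r_q(p)\) and \(\ord_p z'=r_z(p)\) in these adapted coordinates. That last fact is not needed for this proposition, but it will give the divisor \(R_z-R_q\) later.

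I expect the main obstacle to be bookkeeping rather than substance. One must check that the frames \(\partial/\partial z\) and \(dq\) are genuine holomorphic frames of the pullback bundles even at poles of \(q\) or \(z\), after the chart change \(w\mapsto 1/w\). One must also check that the ratio \(z'(t)/q'(t)\) is independent of the auxiliary coordinate \(t\), which holds because the \(dt\) factors cancel. Once these are verified, the two pieces, gluing on \(X^\circ\) and meromorphic extension at finitely many points, give the proposition.
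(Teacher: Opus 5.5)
Your proposal is correct and follows the paper's route. Coordinate independence comes from exactly the transformation law $\frac{dZ}{dQ}\frac{\partial}{\partial Z}\otimes dQ=z_q\frac{\partial}{\partial z}\otimes dq$, or equivalently from the identification with $Tz\circ(Tq)^{-1}$, and that law is all the paper writes down for this proposition. Your meromorphic-extension step via $z'(t)/q'(t)$ in holomorphic target charts is the same local computation the paper defers to Section~4, so including it here simply makes the argument self-contained.
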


This canonical section is the basic geometric object of the
paper.

\section{Ramification and the divisor of the relative morphism}

The relative morphism is nonsingular on \(X^\circ\), but its
meromorphic continuation records the difference between the
ramification behaviors of the two projections.

Let \(p\in X\), and choose a local coordinate \(t\) centered at
\(p\), together with holomorphic coordinates \(u\) and \(v\) on
the two target spheres centered at \(q(p)\) and \(z(p)\),
respectively.  Suppose
\[
u\circ q=a\,t^{m+1}+O(t^{m+2}),
\qquad
v\circ z=b\,t^{n+1}+O(t^{n+2}),
\]
with \(a,b\neq0\).  Then
\[
d(u\circ q)
=
a(m+1)t^m(1+O(t))\,dt,
\]
and
\[
d(v\circ z)
=
b(n+1)t^n(1+O(t))\,dt.
\]
Thus \(m=r_q(p)\) and \(n=r_z(p)\).  In the induced local
holomorphic frames of \(L_q\) and \(L_z\), the relative morphism
has coefficient
\[
g(t)
=
\frac{b(n+1)}{a(m+1)}
t^{\,n-m}(1+O(t)),
\]
and hence
\[
\ord_p(\cZ)=n-m=r_z(p)-r_q(p).
\]

\begin{proposition}
As a meromorphic section of
\[
\cH:=\Hom(L_q,L_z),
\]
the relative morphism satisfies
\[
\Div(\cZ)=R_z-R_q.
\]
\end{proposition}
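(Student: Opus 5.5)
The plan is to reduce the global statement to the pointwise order computation carried out just before the proposition. First, the meromorphic section \(\cZ\) of \(\cH=\Hom(L_q,L_z)\) must be globally well defined on all of \(X\), not only on \(X^\circ\). On \(X^\circ\) the second proposition of Section~3 already shows that \(z_q\,\frac{\partial}{\partial z}\otimes dq\) is intrinsic. Near an arbitrary point \(p\) I will use the local frame
\[
E=\frac{\partial}{\partial v}\otimes du
\]
of \(\cH\) induced by centered coordinates \(u,v\) on the two target spheres. In this frame \(\cZ=g\,E\), with
\[
g=\frac{d(v\circ z)/dt}{d(u\circ q)/dt}.
\]
This is a quotient of holomorphic functions of \(t\), not identically zero because both maps are nonconstant. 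Hence \(\cZ\) extends meromorphically across the finite set \(\operatorname{supp}R_q\cup\operatorname{supp}R_z\). The extension is unique by the identity theorem, so it is independent of the chart.

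Second, I will observe that \(\ord_p(\cZ)\) is well defined. Changing the frame \(E\) multiplies \(g\) by a nonvanishing holomorphic transition function \(Z_v/U_u\), as in the transformation computation of Section~3. This factor is nonvanishing because \(u\mapsto U\) and \(v\mapsto Z\) are local biholomorphisms of the targets. So the vanishing order of \(g\) at \(p\) does not depend on the choices made.

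Third, I will invoke the local expansion preceding the proposition. For \(u\circ q=a t^{m+1}+\cdots\) and \(v\circ z=b t^{n+1}+\cdots\) with \(a,b\neq0\), we have
\[
g(t)=\frac{b(n+1)}{a(m+1)}\,t^{n-m}\bigl(1+O(t)\bigr).
\]
The identifications \(m=r_q(p)\) and \(n=r_z(p)\) come from the vanishing orders of \(d(u\circ q)\) and \(d(v\circ z)\). This gives \(\ord_p\cZ=r_z(p)-r_q(p)\).

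The one point needing care is when \(q(p)=\infty\) or \(z(p)=\infty\). There one must use the chart \(u=1/q\) or \(v=1/z\) rather than \(q\) or \(z\) itself. The ramification index \(r_q(p)\) is defined via the map to \(\PP^1\), so it is exactly the vanishing order of \(d(u\circ q)\) in a chart centered at \(q(p)\). The computation above was written for arbitrary centered target charts, so it covers these points without change. This is the step I expect to be the only real subtlety; everything else is bookkeeping.

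Finally, I will sum over \(p\). Off \(\operatorname{supp}R_q\cup\operatorname{supp}R_z\) we have \(m=n=0\), so \(\ord_p\cZ=0\). The sum is therefore finite and gives
\[
\Div(\cZ)=\sum_p\bigl(r_z(p)-r_q(p)\bigr)p=R_z-R_q.
\]
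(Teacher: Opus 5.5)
Your proof is correct and follows essentially the paper's route. The paper also reduces to the centered-chart computation \(g(t)=\frac{b(n+1)}{a(m+1)}t^{n-m}(1+O(t))\) done just before the proposition, and phrases the conclusion as subtracting the zero divisors \(R_z\) and \(R_q\) of \(Tz\) and \(Tq\), viewed as sections of \(\Hom(TX,L_z)\) and \(\Hom(TX,L_q)\). Your explicit treatment of the meromorphic extension, of the frame independence of \(\ord_p\), and of points over \(\infty\) fills in details that the paper leaves implicit.
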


\begin{proof}
The differential \(Tq\) is a holomorphic section of
\[
\Hom(TX,L_q)
\]
whose zero divisor is \(R_q\).  Similarly, the zero divisor of
\(Tz\) as a section of \(\Hom(TX,L_z)\) is \(R_z\).  Since
\[
\cZ=Tz\circ(Tq)^{-1}
\]
as a meromorphic bundle morphism, its order at every point is
\[
\ord_p(\cZ)=r_z(p)-r_q(p).
\]
This gives the stated divisor identity.
\end{proof}

\begin{remark}
The preceding statement concerns the divisor of \(\cZ\) as a
meromorphic section of the line bundle
\(\Hom(L_q,L_z)\).  It should not be confused with the divisor of
the scalar quotient \(dz/dq\) written in fixed affine
meromorphic frames.  For a meromorphic function
\(q:X\to\PP^1\), the meromorphic differential satisfies
\[
(dq)=R_q-2q^*(\infty),
\]
and similarly
\[
(dz)=R_z-2z^*(\infty).
\]
The additional terms record the divisors of the chosen
meromorphic target frames.  The bundle-theoretic identity
\(\Div(\cZ)=R_z-R_q\) is coordinate invariant.
\end{remark}

At a point where \(r_z(p)=r_q(p)\), the relative morphism may
remain regular and nonzero even if both projections are ramified.
Thus \(\cZ\) detects relative rather than absolute ramification.

\section{The logarithmic connection}

There is a small but essential distinction between a scalar
function and the relative morphism \(\cZ\).  The latter is a
meromorphic section of the line bundle
\[
\cH=\Hom(L_q,L_z),
\]
and therefore the ordinary exterior derivative \(d\cZ\) is not,
by itself, an invariantly defined derivative of a bundle-valued
section.  The notation \(\cZ^{-1}d\cZ\) must consequently be
understood through local trivializations, or equivalently as
connection data.

Let \(U_\alpha\subset X\) be an open set on which \(\cH\) has a
nonvanishing holomorphic frame \(E_\alpha\), and write
\[
\cZ=g_\alpha E_\alpha
\]
where \(g_\alpha\) is meromorphic.  Away from the zeros and poles
of \(g_\alpha\), put
\[
\alpha_\alpha
=
g_\alpha^{-1}dg_\alpha
=
d\log g_\alpha.
\]

On an overlap \(U_\alpha\cap U_\beta\), write
\[
E_\beta=h_{\alpha\beta}E_\alpha.
\]
Then
\[
g_\beta=h_{\alpha\beta}^{-1}g_\alpha,
\]
and hence
\[
\alpha_\beta
=
\alpha_\alpha-d\log h_{\alpha\beta}.
\]
This is the gauge transformation law for local logarithmic
connection data with the convention
\[
D_\alpha=d-\alpha_\alpha.
\]
Indeed, if a local coefficient transforms by
\(f_\beta=h_{\alpha\beta}^{-1}f_\alpha\), then
\[
D_\beta f_\beta
=
h_{\alpha\beta}^{-1}D_\alpha f_\alpha.
\]

\begin{definition}
The logarithmic connection determined by the relative morphism
\(\cZ\) is the meromorphic connection on
\(\cH\) whose local operators are
\[
D_\alpha=d-\alpha_\alpha,
\qquad
\alpha_\alpha=d\log g_\alpha.
\]
We use the normalized local connection form
\[
\Theta_\alpha:=2i\,\alpha_\alpha.
\]
\end{definition}

By construction,
\[
D_\alpha g_\alpha
=
dg_\alpha-\alpha_\alpha g_\alpha
=
0.
\]
Thus the relative differential morphism is parallel for the
connection that it determines.

If
\[
m_p=\ord_p(\cZ),
\]
then in a local holomorphic frame near \(p\) one may write
\[
g_\alpha=t^{m_p}u(t),
\qquad
u(0)\neq0.
\]
Therefore
\[
\Theta_\alpha
=
2i\left(
m_p\frac{dt}{t}+\frac{du}{u}
\right),
\]
so
\[
\operatorname{Res}_p\Theta_\alpha
=
2i\,m_p
=
2i\bigl(r_z(p)-r_q(p)\bigr).
\]

\section{The Maurer--Cartan interpretation}

We now identify the preceding logarithmic connection with the
Maurer--Cartan construction.

The multiplicative group \(\CC^*\) carries the Maurer--Cartan
form
\[
\vartheta_{\mathrm{MC}}=g^{-1}dg.
\]
On every open set \(U_\alpha\) where
\[
\cZ=g_\alpha E_\alpha
\]
and \(g_\alpha\neq0\), the coefficient
\[
g_\alpha:U_\alpha\longrightarrow\CC^*
\]
therefore pulls back the Maurer--Cartan form to
\[
g_\alpha^*\vartheta_{\mathrm{MC}}
=
g_\alpha^{-1}dg_\alpha.
\]

This gives a precise meaning to the symbolic expression
\[
\cZ^{-1}d\cZ.
\]
Throughout the remainder of the paper, this notation denotes the
collection of local logarithmic derivatives
\[
g_\alpha^{-1}dg_\alpha
\]
associated with local representations
\(\cZ=g_\alpha E_\alpha\).  It is not meant as the ordinary
exterior derivative of a globally defined scalar function.

In the distinguished local frames induced by local coordinates
\(q\) and \(z\), one has
\[
\cZ
=
z_q\,\frac{\partial}{\partial z}\otimes dq,
\]
so that
\[
g_\alpha=z_q
\]
and
\[
\alpha_\alpha
=
d\log z_q.
\]

\begin{proposition}[Local Maurer--Cartan description]
Let \(X\) be a Riemann surface and let
\[
q,z:X\longrightarrow\PP^1
\]
be nonconstant meromorphic maps.  Let
\[
\cZ=Tz\circ(Tq)^{-1}
\]
be their relative differential morphism, regarded as a
meromorphic section of
\[
\cH=\Hom(q^*T\PP^1,z^*T\PP^1).
\]
On the complement of \(\Div(\cZ)\), the normalized local forms
of the logarithmic connection determined by \(\cZ\) satisfy
\[
\Theta_\alpha
=
2i\,g_\alpha^{-1}dg_\alpha
\]
whenever
\[
\cZ=g_\alpha E_\alpha.
\]
Equivalently, with the preceding intrinsic convention for the
notation,
\[
\Theta
=
2i\,\cZ^{-1}d\cZ.
\]
In the coordinate frames determined by \(q\) and \(z\), this
becomes
\[
\Theta
=
2i\,d\log z_q.
\]
Thus the logarithmic connection of the relative differential morphism is locally the pullback of the Maurer--Cartan form of
\(\CC^*\).
\end{proposition}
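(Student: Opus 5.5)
The proposition is essentially a matter of unwinding the definitions of Section~5, so the plan is to verify its three assertions in turn: the general local formula, the meaning of the symbolic notation, and the specialization to coordinate frames.

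\textbf{General local formula.} Fix an open set \(U_\alpha\) carrying a nonvanishing holomorphic frame \(E_\alpha\) of \(\cH\), and write \(\cZ=g_\alpha E_\alpha\). By the Definition in Section~5, the normalized connection form is \(\Theta_\alpha=2i\,\alpha_\alpha\) with \(\alpha_\alpha=d\log g_\alpha\). On \(U_\alpha\setminus\operatorname{supp}\Div(\cZ)\), the function \(g_\alpha\) is holomorphic and nonvanishing. Hence \(d\log g_\alpha=g_\alpha^{-1}dg_\alpha\) there, which gives the first displayed identity directly.

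\textbf{Consistency of the symbol \(\cZ^{-1}d\cZ\).} I would recall the gauge law \(\alpha_\beta=\alpha_\alpha-d\log h_{\alpha\beta}\) derived in Section~5. It shows that the family \(\{2i\,g_\alpha^{-1}dg_\alpha\}\) transforms exactly as local connection forms should. Therefore the notation \(2i\,\cZ^{-1}d\cZ\), in the convention fixed at the start of Section~6, names a well-defined connection rather than a frame-dependent object. Its identification with \(g_\alpha^*\vartheta_{\mathrm{MC}}\) then follows from the chain rule applied to \(\vartheta_{\mathrm{MC}}=g^{-1}dg\) on \(\CC^*\).

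\textbf{Coordinate frames.} This step needs some care. On \(U\subset X^\circ\), the first Proposition of Section~3 gives \(\cZ=z_q\,\frac{\partial}{\partial z}\otimes dq\). We take \(E_\alpha=\frac{\partial}{\partial z}\otimes dq\), built from local coordinates \(q\) and \(z\) on the target spheres pulled back to \(X\). This is a nonvanishing holomorphic frame of \(\cH=L_z\otimes L_q^{-1}\) wherever the target coordinates are holomorphic, that is, away from the preimages of \(\infty\); at such points one replaces the target coordinate by \(1/q\) or \(1/z\). With this frame \(g_\alpha=z_q\), and the first part gives \(\Theta=2i\,d\log z_q\).

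The main point to check is that \(E_\alpha\) is a frame of the pullback bundle built from target coordinates, and not something built from \(q\) used as a coordinate on \(X\). The intrinsic description of Section~3 handles exactly this. It also explains why the formula still holds near ramification points: there \(z_q\) merely acquires the order \(r_z-r_q\) recorded in the Proposition of Section~4.
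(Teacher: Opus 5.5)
Your proposal is correct and follows essentially the same route as the paper. Both unwind the Section~5 definition to obtain $\Theta_\alpha=2i\,g_\alpha^{-1}dg_\alpha$ as the pullback of the Maurer--Cartan form of $\CC^*$, use the gauge law for consistency on overlaps, and then specialize to the coordinate frame $\frac{\partial}{\partial z}\otimes dq$, where the coefficient of $\cZ$ is $z_q$. Your additional care about this frame near preimages of $\infty$, and about extending the coefficient identity past ramification points by meromorphic continuation, makes explicit what the paper's proof leaves implicit.
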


\begin{proof}
Choose a local holomorphic frame \(E_\alpha\) of \(\cH\) and
write
\[
\cZ=g_\alpha E_\alpha.
\]
On the locus where \(g_\alpha\neq0,\infty\), the map
\(g_\alpha\) takes values in \(\CC^*\).  Pulling back the
Maurer--Cartan form of \(\CC^*\) gives
\[
g_\alpha^*(g^{-1}dg)
=
g_\alpha^{-1}dg_\alpha
=
\alpha_\alpha.
\]
By definition
\[
\Theta_\alpha=2i\,\alpha_\alpha,
\]
and hence
\[
\Theta_\alpha
=
2i\,g_\alpha^{-1}dg_\alpha.
\]
On overlaps these local forms transform by the logarithmic gauge
law established in Section~5, so they define the same
meromorphic logarithmic connection on \(\cH\).

In the coordinate frame
\[
E_\alpha=
\frac{\partial}{\partial z}\otimes dq,
\]
the coefficient of \(\cZ\) is \(g_\alpha=z_q\).  Therefore
\[
\Theta_\alpha
=
2i\,d\log z_q,
\]
which proves the final local expression.
\end{proof}

The theorem gives the intended geometric interpretation without
identifying \(\cZ\) with a globally defined \(\CC^*\)-valued
function.  The finite object is the bundle morphism \(\cZ\); its
local scalar representatives determine Maurer--Cartan forms, and
these forms glue with the gauge law of a logarithmic connection.

The construction is therefore summarized by
\[
\cZ
\quad\longmapsto\quad
\{\;g_\alpha^{-1}dg_\alpha\;\}
\quad\longmapsto\quad
\Theta.
\]
Symbolically this is precisely
\[
\Theta=2i\,\cZ^{-1}d\cZ.
\]

\section{Flatness, residues, and local reconstruction}

The Maurer--Cartan interpretation immediately gives a local
flatness statement.  In a local holomorphic frame
\(E_\alpha\) of
\[
\cH=\Hom(L_q,L_z),
\]
write
\[
\cZ=g_\alpha E_\alpha,
\qquad
\alpha_\alpha=d\log g_\alpha.
\]
On the complement of the divisor of \(\cZ\),
\[
d\alpha_\alpha=0.
\]
Since on overlaps
\[
\alpha_\beta
=
\alpha_\alpha-d\log h_{\alpha\beta},
\]
the two-forms \(d\alpha_\alpha\) agree.  Hence they define the
curvature of the logarithmic connection.

\begin{corollary}
The logarithmic connection determined by \(\cZ\) is flat on
\[
X\setminus\operatorname{supp}\Div(\cZ).
\]
Equivalently,
\[
d\Theta_\alpha=0
\]
in every local trivialization.
\end{corollary}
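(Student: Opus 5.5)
The argument is local and reduces to the closedness of exact forms, together with the gauge law from Section~5. First I would fix a point \(p\in X\setminus\operatorname{supp}\Div(\cZ)\) and a trivializing open set \(U_\alpha\ni p\) with holomorphic frame \(E_\alpha\) of \(\cH\). Then I would write \(\cZ=g_\alpha E_\alpha\). Since \(\ord_p(\cZ)=0\) by the divisor computation of Section~4 (Proposition on \(\Div(\cZ)=R_z-R_q\)), after shrinking \(U_\alpha\) I can take \(g_\alpha\) holomorphic and nowhere vanishing on \(U_\alpha\). Hence \(\alpha_\alpha=d\log g_\alpha=g_\alpha^{-1}dg_\alpha\) is a holomorphic \(1\)-form there. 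On the smaller set, \(\log g_\alpha\) has a holomorphic branch, so \(\alpha_\alpha=d(\log g_\alpha)\) is exact and \(d\alpha_\alpha=d^2\log g_\alpha=0\). Even without choosing a branch, the direct computation works:
\[
d(g_\alpha^{-1}dg_\alpha)=-g_\alpha^{-2}dg_\alpha\wedge dg_\alpha=0 .
\]
Multiplying by the constant \(2i\) then gives \(d\Theta_\alpha=0\).

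Second, I would check that this local statement is a genuine statement about the connection, and not an artefact of the frame. For the connection \(D_\alpha=d-\alpha_\alpha\) on a line bundle, the curvature in the frame \(E_\alpha\) is the \(2\)-form \(-d\alpha_\alpha\); the wedge term \(\alpha_\alpha\wedge\alpha_\alpha\) vanishes in rank one. On an overlap, the gauge law \(\alpha_\beta=\alpha_\alpha-d\log h_{\alpha\beta}\) gives
\[
d\alpha_\beta=d\alpha_\alpha-d\,d\log h_{\alpha\beta}=d\alpha_\alpha,
\]
because \(h_{\alpha\beta}\) is a nonvanishing holomorphic transition function. So the local curvature forms glue to a global \(2\)-form on \(X\setminus\operatorname{supp}\Div(\cZ)\), and this form vanishes identically by the first step. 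That is the flatness assertion, and the equivalence with ``\(d\Theta_\alpha=0\) in every trivialization'' follows because \(\Theta_\alpha=2i\alpha_\alpha\).

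Two additional remarks would complete the argument, and neither is a real obstacle. On a Riemann surface every holomorphic \(2\)-form of type \((2,0)\) vanishes for dimension reasons, so flatness is in fact automatic; I would mention this but rely on the Maurer--Cartan computation, since that is the mechanism the paper emphasizes. The only point needing care is excluding the support of \(\Div(\cZ)\). There \(g_\alpha\) has a zero or pole, and \(\alpha_\alpha\) has a simple pole with residue \(m_p\) as computed in Section~5. I would therefore state the result only on the punctured surface, where \(d\log g_\alpha\) is holomorphic, and not attempt to make sense of \(d\Theta_\alpha\) distributionally at the punctures, where a delta-type contribution \(2\pi i\,m_p\delta_p\) would appear.
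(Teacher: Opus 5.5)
Your proposal is correct and is essentially the paper's argument. Off the divisor, $d\alpha_\alpha=d(d\log g_\alpha)=0$, and the gauge law $\alpha_\beta=\alpha_\alpha-d\log h_{\alpha\beta}$ makes the two-forms $d\alpha_\alpha$ agree on overlaps, so they define the curvature, which vanishes; the paper adds that the quadratic term of the Maurer--Cartan equation is absent because $\CC^*$ is abelian. Your extra remarks are correct refinements rather than a different route: the curvature sign is $-d\alpha_\alpha$ under the convention $D_\alpha=d-\alpha_\alpha$, flatness is automatic since $(2,0)$-forms vanish on a Riemann surface, and you correctly exclude the punctures.
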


This is the rank-one Maurer--Cartan equation.  For
\(\CC^*\), the Lie algebra is abelian, so the quadratic term in
the usual equation
\[
d\vartheta_{\mathrm{MC}}
+
\vartheta_{\mathrm{MC}}\wedge
\vartheta_{\mathrm{MC}}
=0
\]
vanishes.

Let \(p\in X\), choose a holomorphic coordinate \(t\) centered at
\(p\), and choose a holomorphic frame \(E_\alpha\) of \(\cH\)
near \(p\).  If
\[
\cZ=t^{m_p}u(t)E_\alpha,
\qquad
u(0)\neq0,
\]
then
\[
\Theta_\alpha
=
2i\left(
m_p\frac{dt}{t}
+
\frac{du}{u}
\right).
\]
Consequently
\[
\operatorname{Res}_p\Theta_\alpha
=
2i\,m_p.
\]
Since
\[
m_p=\ord_p(\cZ)=r_z(p)-r_q(p),
\]
we obtain
\[
\operatorname{Res}_p\Theta_\alpha
=
2i\bigl(r_z(p)-r_q(p)\bigr).
\]

For a sufficiently small positively oriented loop
\(\gamma_p\) contained in this trivializing neighborhood,
\[
\int_{\gamma_p}\Theta_\alpha
=
2\pi i\,\operatorname{Res}_p\Theta_\alpha
=
-4\pi\,m_p.
\]
Thus the local logarithmic period records the order of the
relative differential morphism.

It is important to distinguish this local statement from a
claim that \(\Theta\) is a globally defined scalar one-form.
In general, the \(\Theta_\alpha\) are local connection forms and
change by exact logarithmic gauge terms.  Therefore arbitrary
global periods of a chosen scalar representative are not
intrinsic without specifying a trivialization or a corresponding
holonomy convention.

On a simply connected open set \(U\) on which \(\cH\) is
trivialized by \(E_\alpha\) and \(\cZ\) is nonzero, the relation
\[
\frac{\Theta_\alpha}{2i}
=
d\log g_\alpha
\]
can be integrated:
\[
g_\alpha
=
C_\alpha
\exp\left(
\frac{1}{2i}\int\Theta_\alpha
\right),
\qquad
C_\alpha\in\CC^*.
\]
Thus the logarithmic connection reconstructs the local scalar
representative of the relative differential morphism up to a
nonzero constant in the chosen frame.

This is the precise local inverse to the main construction:
\[
\cZ
\longmapsto
\frac{\Theta}{2i}.
\]

\section{The dual morphism and square roots}

The construction is symmetric under interchange of the two
projections.

On the complement of \(\Div(\cZ)\), define the inverse morphism
\[
\cZ^\vee:=\cZ^{-1}:L_z\longrightarrow L_q.
\]
If
\[
\cZ=g_\alpha E_\alpha,
\]
then in the dual frame \(E_\alpha^{-1}\),
\[
\cZ^\vee=g_\alpha^{-1}E_\alpha^{-1}.
\]
Therefore
\[
d\log(g_\alpha^{-1})
=
-d\log g_\alpha.
\]

\begin{proposition}
The normalized logarithmic connection forms for the inverse
relative differential morphism satisfy
\[
\Theta^\vee_\alpha=-\Theta_\alpha.
\]
In the coordinate frames induced by \(q\) and \(z\), this reads
\[
2i\,d\log q_z
=
-2i\,d\log z_q,
\qquad
q_z z_q=1.
\]
\end{proposition}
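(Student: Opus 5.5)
The plan is to run the construction of Section~5 for the inverse morphism \(\cZ^\vee=\cZ^{-1}\), regarded as a meromorphic section of \(\cH^{-1}=\Hom(L_z,L_q)\), and compare its local connection forms with those of \(\cZ\).

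First, fix a local holomorphic frame \(E_\alpha\) of \(\cH\) on \(U_\alpha\). Then \(E_\alpha^{-1}\) is a local holomorphic frame of \(\cH^{-1}\). Away from \(\operatorname{supp}\Div(\cZ)\) we write \(\cZ=g_\alpha E_\alpha\), so \(\cZ^\vee=g_\alpha^{-1}E_\alpha^{-1}\). By the definition in Section~5, with the same normalization, we get
\[
\Theta^\vee_\alpha=2i\,d\log(g_\alpha^{-1})=-2i\,d\log g_\alpha=-\Theta_\alpha .
\]

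Second, check that this is compatible with the gauge law. If \(E_\beta=h_{\alpha\beta}E_\alpha\), then \(E_\beta^{-1}=h_{\alpha\beta}^{-1}E_\alpha^{-1}\). So the transition function for \(\cH^{-1}\) is \(h_{\alpha\beta}^{-1}\), and the law \(\alpha^\vee_\beta=\alpha^\vee_\alpha+d\log h_{\alpha\beta}\) is exactly the negative of the law for \(\alpha_\alpha\). Hence the identity \(\Theta^\vee_\alpha=-\Theta_\alpha\) holds in every trivialization and does not depend on the frames. Since \(d\log(g^{-1})\) is meromorphic, the identity also extends across the divisor, and in particular \(\operatorname{Res}_p\Theta^\vee=-\operatorname{Res}_p\Theta\).

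Third, for the coordinate form, work on \(X^\circ\) where \(dz\neq0\), using \(z\) as a local coordinate. Interchanging the roles of \(q\) and \(z\) in the first proposition of Section~3 gives \(\cZ^\vee=q_z\,\frac{\partial}{\partial q}\otimes dz\). The chain rule gives \(q_z z_q=1\), since \(dq=q_z\,dz=q_z z_q\,dq\). Moreover, \(\frac{\partial}{\partial q}\otimes dz\) is the inverse frame of \(\frac{\partial}{\partial z}\otimes dq\) under the pairing \(\cH\otimes\cH^{-1}\to\mathcal O\). Therefore \(2i\,d\log q_z=-2i\,d\log z_q\).

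The only real subtlety is confirming that the coordinate frame \(\frac{\partial}{\partial q}\otimes dz\) is indeed the dual frame \(E_\alpha^{-1}\), so that the coordinate formula is an instance of the frame formula. This is a one-line contraction check.
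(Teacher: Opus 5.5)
Your proposal is correct and follows essentially the same route as the paper: write \(\cZ^\vee=g_\alpha^{-1}E_\alpha^{-1}\) in the dual frame, use \(d\log(g_\alpha^{-1})=-d\log g_\alpha\), and read off the coordinate form from \(q_z z_q=1\). Your extra checks are correct elaborations that the paper leaves implicit: the gauge law for \(\cH^{-1}\), and the fact that \(\frac{\partial}{\partial q}\otimes dz\) is the dual frame of \(\frac{\partial}{\partial z}\otimes dq\).
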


Thus inversion of the relative differential morphism corresponds
to sign reversal of its infinitesimal logarithmic form.

We next consider square roots.  The local expression
\[
\psi_\alpha=g_\alpha^{-1/2}
\]
always exists after passing to a sufficiently small simply
connected open set avoiding the divisor.  It satisfies
\[
d\log\psi_\alpha
=
-\frac12\,d\log g_\alpha
=
-\frac{\Theta_\alpha}{4i},
\]
or equivalently
\[
d\psi_\alpha
=
-\frac{\Theta_\alpha}{4i}\psi_\alpha.
\]

Globally, however, \(g_\alpha^{-1/2}\) should not automatically
be regarded as an ordinary function or as a half-differential.
The invariant question is whether there exist a line bundle \(M\)
and a meromorphic section \(\psi\) such that
\[
M^{\otimes2}\simeq\cH^{-1},
\qquad
\psi^{\otimes2}=\cZ^{-1}.
\]
Notice first that the line bundle \(\cH^{-1}\) itself always admits
some square root.  Indeed,
\[
\deg \cH^{-1}=2(\deg q-\deg z)
\]
is even, and multiplication by two on \(\operatorname{Pic}^0(X)\) is surjective.
The stronger requirement that the particular meromorphic section
\(\cZ^{-1}\) admit a meromorphic square root is controlled by the
relative ramification divisor.

\begin{proposition}[Square-root criterion]
There exist a line bundle \(M\), an isomorphism
\(M^{\otimes2}\simeq\cH^{-1}\), and a meromorphic section
\(\psi\) satisfying
\[
\psi^{\otimes2}=\cZ^{-1}
\]
if and only if the relative ramification divisor is even:
\[
R_q-R_z=2D
\]
for some integral divisor \(D\) on \(X\).  Equivalently,
\[
r_q(p)\equiv r_z(p)\pmod 2
\qquad\text{for every }p\in X.
\]
In that case one may take \(M\simeq\mathcal O(D)\), and locally
\[
\operatorname{ord}_p\psi
=
\frac{r_q(p)-r_z(p)}{2}.
\]
\end{proposition}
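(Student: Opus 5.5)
The plan is to reduce everything to the divisor identity \(\Div(\cZ)=R_z-R_q\) established in Section~4, hence \(\Div(\cZ^{-1})=R_q-R_z\). The key principle is the standard correspondence between nonzero meromorphic sections and divisors: a nonzero meromorphic section \(s\) of a line bundle \(N\) induces an isomorphism \(N\simeq\mathcal O(\Div s)\) carrying \(s\) to the canonical section \(1_{\Div s}\). I will prove the two implications separately and then read off the local order formula.

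\emph{Necessity.} Suppose \(M\), an isomorphism \(M^{\otimes2}\simeq\cH^{-1}\), and a meromorphic section \(\psi\) with \(\psi^{\otimes2}=\cZ^{-1}\) are given. First, \(\psi\) is not identically zero, since \(\cZ^{-1}\neq0\). Choose a local frame \(F\) of \(M\) near \(p\) and write \(\psi=fF\). Then \(F^{\otimes2}\) is a local frame of \(\cH^{-1}\), and \(\psi^{\otimes2}=f^2F^{\otimes2}\). Therefore
\[
r_q(p)-r_z(p)=\ord_p(\cZ^{-1})=2\,\ord_p f .
\]
Setting \(D:=\Div\psi\), which is an integral divisor, gives \(R_q-R_z=2D\). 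This identity also proves \(\ord_p\psi=(r_q(p)-r_z(p))/2\).

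\emph{Sufficiency.} Assume \(R_q-R_z=2D\) with \(D\) integral. Put \(M:=\mathcal O(D)\) and \(\psi:=1_D\), the canonical meromorphic section with divisor \(D\). Then \(\mathcal O(D)^{\otimes2}=\mathcal O(2D)=\mathcal O(\Div\cZ^{-1})\), and the correspondence principle gives an isomorphism \(\Phi:\cH^{-1}\to\mathcal O(2D)\) with \(\Phi(\cZ^{-1})=1_{2D}=1_D^{\otimes2}\). Composing \(\Phi^{-1}\) with \(M^{\otimes2}=\mathcal O(2D)\) yields the required isomorphism \(M^{\otimes2}\simeq\cH^{-1}\), and under it \(\psi^{\otimes2}=\cZ^{-1}\).

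The equivalence with the parity condition \(r_q(p)\equiv r_z(p)\pmod2\) is immediate, because the divisor is determined pointwise.

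The main point requiring care is making the isomorphism \(\Phi\) explicit. On \(X\setminus\operatorname{supp}\Div\cZ\), the map \(\Phi\) sends \(\cZ^{-1}\) to \(1\). Near each point it extends holomorphically and invertibly because the orders match. Concretely, in local frames \(\cZ^{-1}=t^{2k}u\,E\) with \(u(0)\neq0\), and one sets \(\Phi(E)=u^{-1}\,t^{-2k}\cdot 1_{2D}\), which is a frame of \(\mathcal O(2D)\). I expect this gluing check to be the only step that is not purely formal. It is still routine, because a nowhere-vanishing holomorphic change of frame extends across the points of \(\operatorname{supp}\Div\cZ\).
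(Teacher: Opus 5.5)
Your proof is correct and is essentially the paper's argument: necessity by comparing $2\Div\psi$ with $\Div(\cZ^{-1})=R_q-R_z$, and sufficiency by setting $M=\mathcal O(D)$ and matching divisors.

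The one difference is in how the isomorphism is chosen. You use the isomorphism $\mathcal O(2D)\to\cH^{-1}$, $f\mapsto f\,\cZ^{-1}$, determined by $\cZ^{-1}$ itself. This map is already global, so the gluing check you flagged is not needed. The paper instead picks an arbitrary isomorphism and uses compactness of $X$ to show the discrepancy is a nonzero constant, whose square root it absorbs into $s_D$; your variant does not need compactness.
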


\begin{proof}
If \(\psi^{\otimes2}=\cZ^{-1}\), then
\[
2\Div(\psi)
=
\Div(\cZ^{-1})
=
R_q-R_z,
\]
so every coefficient of \(R_q-R_z\) is even.

Conversely, suppose
\[
R_q-R_z=2D.
\]
Since
\[
\Div(\cZ^{-1})=R_q-R_z=2D,
\]
the line bundle \(\cH^{-1}\) is isomorphic to
\(\mathcal O(2D)\).  Put \(M=\mathcal O(D)\).  Then
\[
M^{\otimes2}\simeq\mathcal O(2D)\simeq\cH^{-1}.
\]
Let \(s_D\) be a meromorphic section of \(M\) with
\(\Div(s_D)=D\).  The sections \(s_D^{\otimes2}\) and
\(\cZ^{-1}\) are meromorphic sections of the same line bundle and
have the same divisor.  Their quotient is therefore a nowhere-zero
meromorphic function on the compact Riemann surface \(X\), hence a
nonzero constant \(c\).  Choosing a square root of \(c\) and
rescaling \(s_D\) gives a section \(\psi\) satisfying
\(\psi^{\otimes2}=\cZ^{-1}\).
\end{proof}

The same criterion is visible directly in the logarithmic
connection.  At a point \(p\),
\[
\operatorname{Res}_p\Theta=2i\bigl(r_z(p)-r_q(p)\bigr),
\]
whereas
\[
\operatorname{Res}_p(d\log\psi)
=
-\frac{1}{4i}\operatorname{Res}_p\Theta
=
\frac{r_q(p)-r_z(p)}{2}.
\]
Thus a single-valued meromorphic square root requires these residues
to be integers, which is exactly the parity condition above.  Around
a small loop about \(p\), a local square root acquires the factor
\((-1)^{r_q(p)-r_z(p)}\); odd relative ramification is precisely the
local sign obstruction.

Accordingly, the notation
\[
\psi=(z_q)^{-1/2}
\]
is best understood as a local expression in the coordinate
trivialization unless the parity condition above, and hence a global
square-root structure for \(\cZ^{-1}\), has been specified.

\section{A hyperelliptic model}

We illustrate the construction on an algebraic curve while
keeping track of the relevant local frames.

Let \(X\) be the normalization of the projective closure of
\[
w^2=P(z),
\]
where \(P\) has no multiple roots.  Consider the meromorphic
projections
\[
q=w,
\qquad
z=z.
\]
On an affine region where \(q\) and \(z\) are finite,
differentiation of
\[
q^2=P(z)
\]
gives
\[
2q\,dq=P'(z)\,dz.
\]
Hence, wherever \(dq\neq0\),
\[
z_q
=
\frac{dz}{dq}
=
\frac{2q}{P'(z)}.
\]

In the corresponding affine meromorphic frames the relative
displacement is represented by
\[
\cZ
=
\frac{2q}{P'(z)}
\frac{\partial}{\partial z}\otimes dq,
\]
and its local logarithmic connection form is
\[
\Theta
=
2i\,d\log\left(\frac{2q}{P'(z)}\right).
\]
Thus
\[
\Theta
=
2i\left(
\frac{dq}{q}
-
\frac{P''(z)}{P'(z)}\,dz
\right)
\]
on the locus where this affine expression is valid.

Near a simple zero \(z=a\) of \(P\), the projection \(z\) has a
simple ramification point, whereas \(q=w\) is a local coordinate.
Indeed,
\[
z-a=cq^2+O(q^4),
\qquad
c\neq0,
\]
so
\[
z_q=2cq+O(q^3).
\]
In a holomorphic local frame of \(\cH\), \(\cZ\) therefore has a
simple zero:
\[
\ord_p(\cZ)=1=r_z(p)-r_q(p).
\]
Correspondingly,
\[
\Theta
=
2i\,\frac{dq}{q}
+
\text{holomorphic terms},
\]
and
\[
\operatorname{Res}_p\Theta=2i.
\]

Conversely, at a point with
\[
P'(z)=0,
\qquad
q\neq0,
\]
the relation
\[
2q\,dq=P'(z)\,dz
\]
shows that the projection \(q\) is ramified while \(z\) may be
regular.  The relative morphism then has a pole whose order is
the corresponding difference
\[
r_z(p)-r_q(p).
\]
This illustrates directly how the divisor
\[
\Div(\cZ)=R_z-R_q
\]
records relative ramification.

At points lying over infinity one must replace the affine target
coordinates by holomorphic coordinates near infinity.  The
bundle-theoretic formulation automatically incorporates the
resulting frame changes and is therefore preferable to treating
\(2q/P'(z)\) as a globally defined scalar representative.

\section{Relation with conformal coordinate nets}

The construction of \(\cZ\) is metric-independent.  We now show
how the logarithmic connection of \(\cZ\) is realized by the
curvature data of a pair of dual conformal coordinate nets.  We
use the sign conventions of \cite{NuramatovAC}.

Suppose locally that \(q=q_1+iq_2\) is a conformal coordinate and
\[
ds^2=H^2(dq_1^2+dq_2^2).
\]
Let \(\omega\) denote the complex curvature coefficient of this
net.  With
\[
\partial_q=\frac12(\partial_{q_1}-i\partial_{q_2}),
\]
the convention of \cite{NuramatovAC} gives
\[
\omega=-\frac{2i}{H}\,\partial_q\log H,
\qquad
H\omega\,dq=-2i\,\partial_q\log H\,dq.
\]
If \(z=z(q)\) is the dual conformal coordinate, then
\[
H=|z_q|,
\qquad
\widetilde H=H^{-1}.
\]
Let \(\widetilde\omega\) be the complex curvature coefficient of
the dual net, defined with the same convention.  Hence
\[
\widetilde\omega
=-\frac{2i}{\widetilde H}\,
\partial_z\log\widetilde H
=
2iH\,\partial_z\log H.
\]
Since
\[
\partial_z=z_q^{-1}\partial_q,
\qquad
dz=z_q\,dq,
\]
we obtain
\[
\frac1H\widetilde\omega\,dz
=
2i\,\partial_q\log H\,dq
=
-H\omega\,dq.
\]

On a simply connected neighborhood on which \(z_q\neq0\), choose
a branch of \(\log z_q\).  Holomorphicity of \(z_q\) and
\(H=|z_q|\) imply
\[
\partial_q\log H
=
\frac12\partial_q\log z_q,
\]
and therefore
\[
d\log z_q
=
2\,\partial_q\log H\,dq.
\]
Equivalently, in the notation
\(\Omega=H\omega\,dq\) of \cite{NuramatovAC}, this is precisely
the previously established reconstruction identity
\[
d\log z_q=i\Omega.
\]

\begin{theorem}[Conformal-net realization of the logarithmic connection]
Let \(q\) and \(z\) be local dual conformal coordinates related
by a holomorphic map \(z=z(q)\) with \(z_q\neq0\).  Let \(H\) be
the conformal factor of the \(q\)-net and let \(\omega\) and
\(\widetilde\omega\) be the complex curvature coefficients of the
two dual nets with the conventions above.  Then
\[
\frac1H\widetilde\omega\,dz
-
H\omega\,dq
=
2i\,d\log z_q.
\]
If \(\cZ\) denotes the relative differential morphism of the two
projections, the same identity is intrinsically expressed as
\[
\frac1H\widetilde\omega\,dz
-
H\omega\,dq
=
2i\,\cZ^{-1}d\cZ,
\]
where the right-hand side denotes the local logarithmic
Maurer--Cartan forms of \(\cZ\).
\end{theorem}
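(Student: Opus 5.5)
The plan is to reduce the first identity to the two computations displayed just before the statement, and then to obtain the intrinsic form from the Local Maurer--Cartan Proposition of Section~6.

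\emph{Step 1: rewrite each connection differential.} Work on a simply connected neighborhood $U$ with $z_q\neq0$, and fix a branch of $\log z_q$ there. From the convention of \cite{NuramatovAC} we have $H\omega\,dq=-2i\,\partial_q\log H\,dq$. For the dual net, $\widetilde H=H^{-1}$ gives $\widetilde\omega=2iH\,\partial_z\log H$. The chain rule, via $\partial_z=z_q^{-1}\partial_q$ and $dz=z_q\,dq$, then turns $\frac1H\widetilde\omega\,dz$ into $2i\,\partial_q\log H\,dq$. Subtracting,
\[
\frac1H\widetilde\omega\,dz-H\omega\,dq=4i\,\partial_q\log H\,dq .
\]

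\emph{Step 2: identify the right-hand side.} Since $H=|z_q|$, we have $\log H=\tfrac12(\log z_q+\overline{\log z_q})$. Here $\log z_q$ is holomorphic and $\overline{\log z_q}$ is antiholomorphic, so $\partial_q\overline{\log z_q}=0$ and hence $\partial_q\log H=\tfrac12\partial_q\log z_q$. Because $\log z_q$ is holomorphic, $d\log z_q=\partial_q\log z_q\,dq$. Therefore
\[
4i\,\partial_q\log H\,dq=2i\,d\log z_q,
\]
which is the first asserted identity. The result is independent of the choice of branch, since $d\log z_q=z_q'/z_q\,dq$. For the same reason the identity on overlapping neighborhoods is consistent.

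\emph{Step 3: intrinsic form.} By the Proposition in Section~3, on $U$ the morphism is $\cZ=z_q\,\frac{\partial}{\partial z}\otimes dq$. So in the coordinate frame $E_\alpha=\frac{\partial}{\partial z}\otimes dq$ its coefficient is $g_\alpha=z_q$. The Local Maurer--Cartan Proposition then gives $2i\,g_\alpha^{-1}dg_\alpha=2i\,d\log z_q$, and this is by definition the local representative of $2i\,\cZ^{-1}d\cZ$ in that frame.

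\emph{Main obstacle.} The delicate point is the sign and factor bookkeeping. One must check that $\widetilde\omega$ uses the same convention with $\widetilde H=H^{-1}$. This produces the sign flip $\frac1H\widetilde\omega\,dz=-H\omega\,dq$, so that the two terms add rather than cancel. One must also keep the factor $\tfrac12$ from $\log|z_q|$. A secondary point is to note that the left-hand side is computed in the $q,z$ coordinate frames. The intrinsic statement should therefore be read frame-wise, with the gauge law of Section~5 accounting for changes of target coordinates.
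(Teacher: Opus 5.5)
Your proposal is correct and follows essentially the same route as the paper. The curvature conventions and the chain rule $\partial_z=z_q^{-1}\partial_q$ give $\frac1H\widetilde\omega\,dz=-H\omega\,dq=2i\,\partial_q\log H\,dq$, and $\partial_q\log|z_q|=\tfrac12\partial_q\log z_q$ turns this into $i\,d\log z_q$. The intrinsic form is then read off from the frame $\frac{\partial}{\partial z}\otimes dq$, in which $\cZ$ has coefficient $z_q$. The only cosmetic difference is that you subtract first (getting $4i\,\partial_q\log H\,dq$) and convert afterwards, while the paper converts each term to $\mp i\,d\log z_q$ before subtracting.
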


\begin{proof}
From the preceding formulas,
\[
H\omega\,dq
=-i\,d\log z_q,
\qquad
\frac1H\widetilde\omega\,dz
=+i\,d\log z_q.
\]
Subtracting the first equality from the second gives
\[
\frac1H\widetilde\omega\,dz-H\omega\,dq
=2i\,d\log z_q.
\]
In the coordinate frame of
\(\cH=\Hom(q^*T\PP^1,z^*T\PP^1)\), the local coefficient of
\(\cZ\) is \(z_q\).  Hence
\(d\log z_q=\cZ^{-1}d\cZ\) in the local logarithmic sense of
Section~6, which proves the invariant form of the identity.
\end{proof}

\begin{remark}
The order of the two terms is essential for the sign conventions
of \cite{NuramatovAC}.  With those conventions one has
\[
H\omega\,dq=-i\,d\log z_q,
\qquad
H^{-1}\widetilde\omega\,dz=+i\,d\log z_q.
\]
Thus the two dual connection differentials are opposite, and the
dual-minus-original difference is the normalized logarithmic
connection \(2i\,d\log z_q\).
\end{remark}

This theorem separates the intrinsic and metric levels of the
construction.  The meromorphic section \(\cZ\) is determined by
the two projections alone.  A choice of dual conformal-net
geometry provides a Levi--Civita realization of the logarithmic
connection already carried by \(\cZ\).

\begin{corollary}[Relative ramification from the conformal connection]
Let $p\in X$. Suppose that the dual conformal-net expression is
defined on a punctured neighborhood of $p$. Then its meromorphic
continuation is the logarithmic connection form $\Theta$, and
\[
\operatorname{Res}_p\left(\frac1H\widetilde\omega\,dz-H\omega\,dq\right)
=2i\bigl(r_z(p)-r_q(p)\bigr).
\]
Thus a nonzero residue detects a mismatch between the
ramification orders of the two projections.
\end{corollary}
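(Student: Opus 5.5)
The plan is to reduce the corollary to the theorem just proved together with the residue computation of Section~7. On the punctured neighborhood \(U^*=U\setminus\{p\}\), where the dual conformal-net expression is defined, we may shrink \(U^*\) so that it avoids \(\operatorname{supp}R_q\cup\operatorname{supp}R_z\) except possibly at \(p\). On each small simply connected subset of \(U^*\), the theorem gives
\[
\frac1H\widetilde\omega\,dz-H\omega\,dq=2i\,d\log z_q.
\]
Next, we compare this coordinate expression with the local connection form \(\Theta_\alpha\) in a holomorphic frame \(E_\alpha\) of \(\cH\) near \(p\). For this we choose a coordinate \(t\) centered at \(p\) and holomorphic target coordinates \(u,v\) centered at \(q(p)\) and \(z(p)\), as in Section~4. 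We then take the coordinate frame \(\partial_v\otimes du\), and use it to interpret \(q\), \(z\) and the conformal data.

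The key observation is that \(d\log z_q\) is a single-valued meromorphic one-form on \(U^*\), even though \(\log z_q\) itself is only locally defined. Indeed, \(d\log z_q=dz_q/z_q\), and \(z_q=g_\alpha\) is the coefficient of \(\cZ\) in the holomorphic frame \(E_\alpha\). Hence the left-hand side, defined a priori only through branch choices, glues on \(U^*\) to \(2i\,dg_\alpha/g_\alpha=\Theta_\alpha\). Since \(g_\alpha\) extends meromorphically across \(p\), this is the claimed meromorphic continuation.

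Once the continuation is identified, the residue follows from Section~7. Writing \(g_\alpha=t^{m_p}u(t)\) with \(u(0)\neq0\) and \(m_p=\ord_p(\cZ)=r_z(p)-r_q(p)\) by the divisor proposition of Section~4, the residue is \(2i\,m_p\). This gives the stated formula, and in particular a nonzero residue exactly when \(r_z(p)\neq r_q(p)\).

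The main obstacle is making sure the frame in which the conformal-net expression lives really is a \emph{holomorphic} frame of \(\cH\) at \(p\). If \(q(p)\) or \(z(p)\) is \(\infty\), the affine frame \(\partial_z\otimes dq\) has a zero or pole at \(p\), and \(d\log z_q\) would pick up spurious residues from \(q^*(\infty)\) and \(z^*(\infty)\), as in the remark of Section~4. I would handle this by requiring the conformal coordinates near \(p\) to be holomorphic target coordinates, that is, centered at finite values or replaced by \(1/q\) or \(1/z\). The gauge law of Section~5 shows that changing to such a frame alters \(\Theta_\alpha\) only by \(d\log h\) with \(h\) holomorphic and nonvanishing near \(p\), which has zero residue. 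Thus the residue is frame-independent among holomorphic frames, and the corollary holds under this interpretation of ``defined on a punctured neighborhood''.
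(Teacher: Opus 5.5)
Your proposal is correct and follows essentially the same route as the paper's proof. Like the paper, you use the main theorem on the punctured neighborhood to identify the expression with $2i\,d\log g$ for the coefficient $g$ of $\cZ$ in a holomorphic frame of $\cH$, then read off the residue $2i\,m_p$ from $g=t^{m_p}u(t)$ with $m_p=\ord_p(\cZ)=r_z(p)-r_q(p)$. Your additional insistence that the conformal coordinates near $p$ give a genuinely holomorphic frame of $\cH$ (to exclude the spurious contributions from $q^*(\infty)$ and $z^*(\infty)$ over $\infty$) makes explicit an assumption the paper's proof leaves implicit, and your frame-independence remark is the content of the paper's subsequent lemma.
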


\begin{proof}
On the punctured neighborhood, the main theorem identifies the
left-hand side with $2i\,d\log z_q$, interpreted as the local
logarithmic form of $\cZ$. In a holomorphic frame near $p$, write
\[
\cZ=t^{m_p}u(t)E,\qquad u(0)\neq0.
\]
Then
\[
\Theta=2i\left(m_p\frac{dt}{t}+\frac{du}{u}\right),
\]
so $\operatorname{Res}_p\Theta=2im_p$. Since
$m_p=\ord_p(\cZ)=r_z(p)-r_q(p)$, the result follows.
\end{proof}

\begin{lemma}[Frame independence of the logarithmic residue]
Let $p\in X$, let $E$ be a holomorphic frame of $\cH$ on a
neighborhood of $p$, and write $\cZ=gE$. Then
\[
\rho_p(\cZ):=\operatorname{Res}_p(d\log g)
\]
is independent of the chosen holomorphic frame and satisfies
\[
\rho_p(\cZ)=\ord_p(\cZ).
\]
Consequently the normalized residue
$\operatorname{Res}_p\Theta:=2i\,\rho_p(\cZ)$ is intrinsically
defined.
\end{lemma}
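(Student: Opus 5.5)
The plan is to compute the residue directly in an arbitrary holomorphic frame, and then compare two frames through their transition function.

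First, fix a holomorphic coordinate \(t\) centered at \(p\) and a holomorphic frame \(E\) of \(\cH\) near \(p\). Write \(\cZ=gE\) with \(g\) meromorphic and not identically zero; this is possible since \(\Div(\cZ)=R_z-R_q\) is a finite divisor. Factor
\[
g=t^{m}u(t),\qquad u(0)\neq0,
\]
where \(m=\ord_p(g)\). By definition, the order of a meromorphic section of a line bundle at \(p\) is the order of its coefficient in any local holomorphic frame. So \(m=\ord_p(\cZ)\), and by the divisor proposition of Section~4 this equals \(r_z(p)-r_q(p)\). Then
\[
d\log g=m\,\frac{dt}{t}+\frac{du}{u},
\]
and \(du/u\) is holomorphic near \(p\) because \(u\) is holomorphic and nonvanishing there. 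Hence \(\operatorname{Res}_p(d\log g)=m=\ord_p(\cZ)\).

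Second, for frame independence, take another holomorphic frame \(E'\) near \(p\) with \(E'=hE\), where \(h\) is holomorphic and nowhere zero. Then \(\cZ=g'E'\) with \(g'=h^{-1}g\), so
\[
d\log g'=d\log g-d\log h.
\]
This is the gauge law of Section~5. The form \(d\log h=dh/h\) is holomorphic at \(p\), so it has zero residue, and therefore the two residues agree. The same computation shows that the residue does not depend on the choice of coordinate \(t\): a residue of a meromorphic one-form is coordinate invariant, since it equals \((2\pi i)^{-1}\) times the integral over a small loop around \(p\). Multiplying by \(2i\) then gives the intrinsic normalized residue \(\operatorname{Res}_p\Theta=2i\,\rho_p(\cZ)\), which is consistent with the earlier residue computation in Section~7.

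No step here is a serious obstacle. The one point that needs care is that the admissible frames must be \emph{holomorphic and nonvanishing} at \(p\). Meromorphic frames, such as the affine coordinate frames \(\partial/\partial z\otimes dq\) near points over \(\infty\) (compare the Remark in Section~4), can shift the residue by the order of the transition function. I will therefore state explicitly that \(h(p)\neq0,\infty\), and note that the coordinate frames of Section~10 are admissible only where \(q\) and \(z\) are finite.
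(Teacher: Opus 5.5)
Your proposal is correct and is essentially the paper's proof: the change of frame $g'=h^{-1}g$ with $d\log h$ holomorphic at $p$ gives frame independence, and the factorization $g=t^{m}u(t)$ with $u(0)\neq0$ identifies the residue with $\ord_p(\cZ)$; you present these two steps in the opposite order. Your explicit caveat that admissible frames must be holomorphic and nonvanishing at $p$ (so that, for example, affine coordinate frames over $\infty$ are excluded) is a sensible clarification that the paper leaves implicit.
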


\begin{proof}
If $E'=hE$ is another holomorphic frame near $p$, then $h$ is
holomorphic and nowhere zero and the new coefficient is
$g'=h^{-1}g$. Hence
\[
d\log g'=d\log g-d\log h.
\]
Since $d\log h$ is holomorphic near $p$, it has zero residue.
Thus $\operatorname{Res}_p(d\log g)$ is frame independent. Writing
$g=t^m u(t)$ in a local coordinate $t$, with $u(0)\neq0$, gives
\[
d\log g=m\frac{dt}{t}+d\log u,
\]
so the residue is $m=\ord_p(\cZ)$.
\end{proof}

\begin{corollary}[Global degree formula]
For the logarithmic connection determined by $\cZ$, with residues
understood intrinsically as in the preceding lemma,
\[
\sum_{p\in X}\operatorname{Res}_p\Theta
=2i\,\deg\cH
=4i\bigl(\deg z-\deg q\bigr).
\]
Equivalently, whenever the conformal-net realization is available
on the complement of the singular set,
\[
\sum_{p\in X}\operatorname{Res}_p\left(\frac1H\widetilde\omega\,dz-H\omega\,dq\right)
=4i\bigl(\deg z-\deg q\bigr),
\]
where the residue at a singular point means the intrinsic residue
of the meromorphic logarithmic connection to which the displayed
local conformal form extends.
\end{corollary}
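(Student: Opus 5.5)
By the Frame-independence lemma, the residue of \(\Theta\) at each point is intrinsic and equals \(2i\,\ord_p(\cZ)\). Summing over \(X\) gives
\[
\sum_{p\in X}\operatorname{Res}_p\Theta=2i\sum_{p\in X}\ord_p(\cZ)=2i\,\deg\Div(\cZ).
\]
The sum is finite because \(\cZ\) is a nonzero meromorphic section, so only the points of \(\operatorname{supp}(R_q+R_z)\) contribute. The degree of the divisor of any nonzero meromorphic section of a line bundle on a compact Riemann surface is the degree of the bundle. This yields the first equality, \(\sum_p\operatorname{Res}_p\Theta=2i\deg\cH\).

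For the second equality I compute \(\deg\cH\) in two independent ways, which also serves as a consistency check. \emph{Bundle route:} \(\cH\simeq L_z\otimes L_q^{-1}\) and \(\deg T\PP^1=2\). Pulling back by a map of degree \(d\) multiplies degrees by \(d\), so \(\deg L_z=2\deg z\) and \(\deg L_q=2\deg q\), hence \(\deg\cH=2(\deg z-\deg q)\). \emph{Divisor route:} \(\deg\Div(\cZ)=\deg R_z-\deg R_q\), and Riemann--Hurwitz gives \(\deg R_z=2g-2+2\deg z\) and similarly for \(q\). The genus terms cancel, leaving \(2(\deg z-\deg q)\). Either route gives \(\sum_p\operatorname{Res}_p\Theta=4i(\deg z-\deg q)\).

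The conformal-net version then follows pointwise. By the Corollary on relative ramification from the conformal connection, at every point the intrinsic residue of the meromorphic continuation of \(\frac1H\widetilde\omega\,dz-H\omega\,dq\) equals \(\operatorname{Res}_p\Theta\). Away from \(\operatorname{supp}\Div(\cZ)\) the local forms are holomorphic, so those residues vanish. Summing gives the same total.

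The main subtlety is that \(\Theta\) is not a global scalar meromorphic one-form, so the residue theorem cannot be invoked directly; a naive application would wrongly give \(0\). The argument avoids this by never summing residues of a single form. It sums the frame-independent local residues of the Lemma and identifies the total with \(\deg\cH\) via divisor degree. I must check that the affine-frame representative \(z_q\) is not used at points over \(\infty\), where the Remark on \((dq)=R_q-2q^*(\infty)\) shows the scalar divisor differs. Using holomorphic frames throughout, as the Lemma requires, handles this.
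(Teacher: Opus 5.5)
Your proof is correct and takes essentially the same route as the paper: sum the frame-independent residues from the Lemma to get $2i\deg\Div(\cZ)$, then evaluate that degree both via Riemann--Hurwitz applied to $\Div(\cZ)=R_z-R_q$ and via $\deg\cH=2\deg z-2\deg q$. The only difference is emphasis: the paper leads with the Riemann--Hurwitz computation and adds the bundle-degree identity as an equivalent check, whereas you first invoke $\deg\Div(\cZ)=\deg\cH$ and treat the two computations symmetrically.
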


\begin{proof}
The preceding lemma and Proposition~4.1 give
\[
\sum_{p\in X}\operatorname{Res}_p\Theta
=2i\sum_{p\in X}\ord_p(\cZ)
=2i\,\deg\Div(\cZ)
=2i\bigl(\deg R_z-\deg R_q\bigr).
\]
Here no global scalar one-form is being summed: each summand is the
frame-independent local residue of the logarithmic connection.
Riemann--Hurwitz gives
\[
\deg R_q=2g-2+2\deg q,\qquad
\deg R_z=2g-2+2\deg z,
\]
and hence
\[
\deg R_z-\deg R_q=2(\deg z-\deg q).
\]
This proves the formula. Equivalently, since
\[
\cH=z^*T\PP^1\otimes(q^*T\PP^1)^{-1},
\]
one has
\[
\deg\cH=2\deg z-2\deg q
       =\deg\Div(\cZ),
\]
which gives the same identity directly.
\end{proof}

\begin{remark}
There is no contradiction with the residue theorem for a global
meromorphic one-form. The collection $\{\Theta_\alpha\}$ consists
of local connection forms of the generally nontrivial line bundle
$\cH$ and need not arise by restricting one global meromorphic
one-form on $X$. What is global here is the residue assignment
$p\mapsto\operatorname{Res}_p\Theta$, because changing a
holomorphic frame adds $-2i\,d\log h$ with $h$ holomorphic and
nowhere zero, hence with zero residue. With the convention
$D=d-\alpha$, the sign is therefore fixed by
\[
\operatorname{Res}_p\alpha=\ord_p(\cZ),\qquad
\sum_p\ord_p(\cZ)=\deg\cH.
\]
\end{remark}

\section{Discussion}

The construction may be summarized invariantly as
\[
(q,z)
\longmapsto
(L_q,L_z)
\longmapsto
\cH=\Hom(L_q,L_z)
\longmapsto
\cZ
\longmapsto
\{\alpha_\alpha=d\log g_\alpha\}.
\]
Here
\[
\cZ=g_\alpha E_\alpha
\]
in a local holomorphic frame \(E_\alpha\) of \(\cH\), and the
forms \(\alpha_\alpha\) obey the logarithmic gauge law
\[
\alpha_\beta
=
\alpha_\alpha-d\log h_{\alpha\beta}.
\]
They therefore describe a flat meromorphic logarithmic
connection away from the divisor of \(\cZ\).

The finite and infinitesimal descriptions are linked by
\[
\Theta_\alpha
=
2i\,\alpha_\alpha
=
2i\,g_\alpha^{-1}dg_\alpha.
\]
This is the precise content of the symbolic formula
\[
\Theta=2i\,\cZ^{-1}d\cZ.
\]

The divisor identity
\[
\Div(\cZ)=R_z-R_q
\]
shows that the relative differential morphism compares the
ramification of the two projections.  Locally, the same
information reappears infinitesimally as the residue
\[
\operatorname{Res}_p\Theta_\alpha
=
2i\bigl(r_z(p)-r_q(p)\bigr)
\]
in a holomorphic trivialization near \(p\).

The construction is also locally reversible.  On a simply
connected trivializing neighborhood,
\[
g_\alpha
=
C_\alpha
\exp\left(
\frac1{2i}\int\Theta_\alpha
\right).
\]
The constants and their transformations on overlaps are part of
the line-bundle data.  Thus no globally defined scalar
exponential reconstruction is asserted unless a global
trivialization has been chosen.

This distinction is essential.  The geometric object is not the
coordinate coefficient \(z_q\) alone, but the meromorphic
section \(\cZ\) of the relative homomorphism bundle.  Likewise,
the intrinsic infinitesimal object is the logarithmic connection,
not an arbitrarily chosen global scalar one-form.

\section{Conclusion}

Let
\[
q,z:X\longrightarrow\PP^1
\]
be two nonconstant meromorphic projections.  Their differentials
determine the canonical meromorphic relative differential morphism
\[
\cZ=Tz\circ(Tq)^{-1}
\]
as a section of
\[
\cH
=
\Hom(q^*T\PP^1,z^*T\PP^1).
\]
Its divisor is
\[
\Div(\cZ)=R_z-R_q,
\]
so its zeros and poles measure precisely the difference of the
ramification divisors of the two projections.

Writing locally
\[
\cZ=g_\alpha E_\alpha,
\]
the logarithmic derivatives
\[
g_\alpha^{-1}dg_\alpha
\]
are pullbacks of the Maurer--Cartan form of \(\CC^*\).  Their
gauge transformation law is exactly that of local logarithmic
connection data on \(\cH\).  With the normalization used in this
paper,
\[
\Theta_\alpha
=
2i\,g_\alpha^{-1}dg_\alpha.
\]
This gives the intrinsic meaning of the central symbolic identity
\[
\Theta=2i\,\cZ^{-1}d\cZ.
\]

In coordinate frames induced by \(q\) and \(z\), the same
identity becomes
\[
\Theta=2i\,d\log z_q.
\]
The formula therefore connects three descriptions of the same
relative geometry: ramification divisors, a meromorphic
bundle morphism, and a flat logarithmic connection.

When the projections arise from the dual conformal-net setting
of \cite{NuramatovAC}, the curvature coefficients of the two nets
satisfy
\[
\frac1H\widetilde\omega\,dz-H\omega\,dq
=
2i\,d\log z_q
=
2i\,\cZ^{-1}d\cZ.
\]
The first paper supplies the conformal reconstruction identity
\(d\log z_q=iH\omega\,dq\); the symmetric dual difference and its
bundle-theoretic interpretation are established here.  This
identifies the intrinsic logarithmic connection of the relative
differential morphism with the difference of the two dual
Levi--Civita connection differentials.  Moreover, the local residues recover the relative
ramification orders,
\[
\operatorname{Res}_p\Theta=2i\bigl(r_z(p)-r_q(p)\bigr),
\]
while their global sum satisfies
\[
\sum_{p\in X}\operatorname{Res}_p\Theta=4i\bigl(\deg z-\deg q\bigr).
\]
Thus the conformal connection data link local differential
geometry to the global ramification and degree data of the two
meromorphic projections.

\end{document}